\newtheorem{thm}{\textbf Theorem}[section]
\newtheorem{lem}{\textbf Lemma}[section]
\newtheorem{rem}{\textbf Remark}[section]
\newcommand{\be}{\begin{eqnarray}}
\newcommand{\ee}{\end{eqnarray}}
\newcommand{\mr}{\mathbb{R}}
\newcommand{\mx}{\mbox}
\newcommand{\bes}{\begin{eqnarray*}}
\newcommand{\ees}{\end{eqnarray*}}
\begin{document}
\begin{titlepage}
\title{Global existence and decay rate of strong solution to incompressible Oldroyd type model equations}
\author{Baoquan Yuan\thanks{Corresponding Author: B. Yuan}\ \  and Yun Liu
       \\ School of Mathematics and Information Science,
       \\ Henan Polytechnic University,  Henan,  454000,  China.\\
        (bqyuan@hpu.edu.cn, liuyunlexie@163.com)
          }
\date{}
\end{titlepage}
\maketitle
\begin{abstract}
This paper investigates the global existence and the decay rate in time of a solution to
the Cauchy problem for an incompressible Oldroyd model with a deformation tensor damping term.
There are three major results. The first is the global existence of the solution for small initial data.
Second, we derive the sharp time decay of the solution in $L^{2}-$norm. Finally, the sharp time decay of the solution of higher order Sobolev norms is obtained.
 \vskip0.1in
 \noindent{\bf  AMS Subject Classification 2000:} 35Q35, 76A10, 35A01.
\end{abstract}

\vspace{.2in} {\bf Key words:}\quad Incompressible Oldroyd model; damping term; global existence; decay rate.



\section{Introduction}
\setcounter{equation}{0}

\qquad In this paper, we consider the incompressible Oldroyd model with a deformation tensor damping term
 \begin{eqnarray}\begin{cases}\label{equation}
 \partial_{t}u  -\mu\Delta u + u \cdot \nabla u  + \nabla p = \nabla\cdot(FF^{T}),\\
 \partial_{t}F + \nu F+u \cdot \nabla F  =\nabla uF,\\
 \mbox{div}u=0
 \end{cases}
 \end{eqnarray}
for any $t>0$, $x\in \mathbb{R}^{3}$, where $u=u (t,x)$ is the velocity
of the flow, $\mu>0$ the kinematic viscosity, $\nu\geq 0$ a constant, $p$ the scalar pressure and $F$ the deformation tensor of the fluid.
 We define $(\nabla\cdot F)_{i}=\partial_{x_j}F_{ij}$ for the matrix $F$. When $\nu=0$, the equation (\ref{equation}) reduces to the classic Oldroyd model which exhibits an incompressible non-Newtonian fluid. Many hydrodynamic behaviors of the
complex fluids can be regarded as a consequence of the interaction between the fluid motions and the internal elastic properties.
Physical background on this model can be found in \cite{L}, \cite{1} and \cite{2}.

Supposing $\mbox{div}F^{T}(0,x)=0$, it can be proved that $\mbox{div} F^T(t,x)=0$ a.e. for any time $t>0$. In fact from $(\ref{equation})_{2}$ one has
 \be\label{1.2}
\partial_{t}(\nabla\cdot F^{T})+\nu\nabla\cdot F^{T} +u\cdot\nabla(\nabla\cdot F^{T})=0.
 \ee
 Multiplying the equation (\ref{1.2}) by $\nabla\cdot F^{T}$ and integrating over $\mathbb{R}^{3}$ then using the divergence free condition of $u$, it yields that
  \be
\frac{\mbox{d}}{\mbox{d}t}\|\nabla\cdot F^{T}\|_{L^{2}}^{2}+2\nu\|\nabla\cdot F^{T}\|_{L^{2}}^{2}=0,\nonumber
 \ee
which implies $\|\nabla\cdot F^T\|_{L^2}=0$ for any time $t>0$.
Therefore $\nabla\cdot(FF^{T})=(F_{.i}\cdot\nabla)F_{.i}$, and the system (\ref{equation}) can be written in a equivalent form
 \be\label{1.1}
 \begin{cases}
 \partial_{t}u  - \mu\Delta u + u \cdot \nabla u  + \nabla p = F_{.i}\cdot \nabla F_{.i} ,\\
 \partial_{t}F_{.j} +\nu F_{.j}+ u \cdot \nabla F_{.j}  =F_{.j}\cdot\nabla u, \  j = 1, \cdots , n,\\
\mbox{div}u=0, \mbox{div}F^{T}=0.
 \end{cases}
 \ee

If $\nu=0$, (\ref{1.1}) is the classical incompressible Oldroyd model equations. For this model equations, the first thing concerned is the existence of the local or global solution. Lin, Liu and Zhang \cite{L-L-ZHANG} proved the local existence of smooth solutions and the global existence of classical solutions with small initial data in both the whole space and the periodic domain, if the initial data is sufficiently close to the equilibrium state for the global existence case. Later, Lei, Liu and Zhou \cite{L-L-ZHOU} established the similar existence result of both local and global smooth solutions to the Cauchy problem of incompressible Oldroyd model equations provided that the initial data is sufficiently close to the equilibrium state.

{\bf Theorem  A} For the divergence free smooth initial data
$({u_0},{F_0}) \in {H^2}({\mathbb{R}^n})$ for $n=2$ or $3$, there exists a positive
time $T=T(\|u_{0}\|_{H^{2}},\|F_{0}\|_{H^{2}})$ such that the system
(\ref{1.1}) with $\nu=0$ possesses a unique smooth solution on
 $[0, T]$ with
  \bes
  u \in {L^\infty }([0,T];{H^2}({\mathbb{R}^n})) \cap
 {L^2}([0,T];{H^3}({\mathbb{R}^n})),\  F \in{L^\infty }([0,T];{H^2}({\mathbb{R}^n})).
 \ees
Moreover, if $T^{*}$ is the maximal time of existence, then
\bes
 \int_0^{{T^*}} {\left\| {\nabla u} \right\|}
_{{H^2}}^2\mathrm{d}s =  + \infty.
 \ees

 In a bounded domain, Lin and Zhang \cite{L-ZHANG} showed the local well-posedness of the initial-boundary value problem of the Oldroyd model with Dirichlet condition and the global well-posedness of the initial-boundary value problem when the initial data is sufficiently close to the equilibrium state. Qian \cite{jq} obtained the local existence of the solution with initial data in critical Besov space, and if the initial data is sufficiently close to the equilibrium state in the critical Besov, the solution is globally in time. For more studies on the topics of the Oldroyd model readers refer to \cite{A,B,C,D}.

Recently, we \cite{Yuan-Liu} establish a local well-posedness result in $H^s(\mr^3)$ for $s>\frac32$ for the classical incompressible Oldroyd model equations by virtue of a new commutator estimate proved by Fefferman etc. \cite{Feff}. That is

{\bf Theorem  B} Assume $u_{0},F_{0}\in H^{s} (\mathbb{R}^{3})$ with $s>\frac{3}{2}$.
 Then, there exists a time $T=T(\|u_{0}\|_{H^{s}},\|F_{0}\|_{H^{s}})>0$ such that equations (\ref{1.1}) with $\nu=0$ have a unique strong solution $(u,F)$
 with $u,F\in C([0,T]; H^{s} (\mathbb{R}^{3}))$.

This paper is dedicated to the study of the Cauchy problem for system ({\ref{1.1}) with the initial condition
\be\label{1.4}
(u,F)(0,x)=(u_{0}(x),F_{0}(x))\in H^m(\mr^3) \mbox{ for } m\ge 3.
\ee

The purpose of this paper is to obtain the global existence of small initial datum, and the decay rate of the smooth solution for the model (\ref{1.1}).
For the system (\ref{1.1}) with $\nu=0$, the local-in-time existence and uniqueness of solution in $H^{s}$ for $s>\frac{3}{2}$ are derived. But the global existence of the small initial data solution is an open problem. If we have a deformation tensor term $F$ in deformation tensor equation $(\ref{1.1})_{2}$, the local existence of strong solution in $H^{m}$ for $m\ge3$ still holds, which is the following theorem.

$\bf{Theorem\  C}$  Assume $u_{0},F_{0}\in H^{s} (\mathbb{R}^{3})$ with $s>1+\frac{3}{2}$.
 Then, there exists a time $T=T(\|u_{0}\|_{H^{s}},\|F_{0}\|_{H^{s}})>0$ such that equations (\ref{1.1}) with $\nu>0$ have a unique strong solution $(u,F)$
 with $u,F\in C([0,T]; H^{s} (\mathbb{R}^{3}))$. Moreover, the local solution $(u,F)$ satisfies the following estimate
 \be\label{3.34}
\|u(\cdot,t)\|_{H^{s}}^{2}+\|F(\cdot,t)\|_{H^{s}}^{2}
+\int_{0}^{t}\|F(\cdot,\tau)\|_{H^{s}}^{2}+\|\nabla u(\cdot,\tau)\|_{H^{s}}^{2}\mbox{d}\tau
\leq C_{1} (\|u_{0}\|_{H^{s}}^{2}+\|F_{0}\|_{H^{s}}^{2})
\ee
for any $t\in[0,T]$.

 \begin{rem}
In Theorem C, if we only require $s>\frac {3}2$, the local existence of strong solution also holds. To have the a priori estimate (\ref{3.34}) the condition
$s>1+\frac{3}2$ is required.
 \end{rem}

To this end, we state our main results as follows:


\begin{thm}\label{them1}
Let $m \geq 3$ be an integer, assume that $(u_{0},F_{0})\in  H^{m}(\mathbb{R}^{3})$ and the initial data satisfies
\be\label{1.5}
   \|u_{0}\|_{H^{m}}+\|F_{0}\|_{H^{m}}\leq \delta_{0}\nonumber
   \ee
   for a small constant $\delta_{0}>0$.
Then, there exists a unique globally smooth solution $(u,F)$ to the Cauchy problem (\ref{1.1}) and (\ref{1.4}) satisfying
\be\label{1.6}
\|(u,F)(\cdot,t)\|_{H^{m}}^{2}+\int_{0}^{t}\|\nabla u(\cdot,\tau)\|_{H^{m}}^{2}+\|F(\cdot,\tau)\|_{H^{m}}^{2}\mbox{d}\tau\leq C_1\|u_{0},F_{0}\|_{H^{m}}^{2}\nonumber
\ee
for all $t>0$.
\end{thm}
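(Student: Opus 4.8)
The plan is to combine the local existence from Theorem C with a global-in-time a priori estimate obtained by a continuation argument. First I would fix the local solution $(u,F)$ on $[0,T]$ given by Theorem C, and define
\be
E(t)=\|(u,F)(\cdot,t)\|_{H^m}^2,\qquad
D(t)=\|\nabla u(\cdot,t)\|_{H^m}^2+\|F(\cdot,t)\|_{H^m}^2.\nonumber
\ee
The goal is the differential inequality $\frac{\md}{\md t}E(t)+c\,D(t)\le C\sqrt{E(t)}\,D(t)$ for some absolute constants $c,C>0$, valid as long as $\sqrt{E(t)}$ stays below a fixed threshold. Once this is established, a standard bootstrap/continuation argument: if $\sqrt{E(0)}\le\delta_0$ with $\delta_0$ so small that $C\delta_0<c/2$ and $C_1\delta_0^2$ is below the threshold (with $C_1$ as in \eqref{3.34}), then $\frac{\md}{\md t}E+\tfrac{c}{2}D\le 0$ on the maximal interval, so $E(t)\le E(0)$ and $\int_0^t D\le \tfrac2c E(0)$ for all $t$; the blow-up criterion in Theorem C then forces $T^*=\infty$, and integrating gives the stated bound \eqref{1.6}.

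The core is therefore the energy estimate. I would apply $\pt^\al$ for $|\al|\le m$ to both equations in \eqref{1.1}, take $L^2$ inner products with $\pt^\al u$ and $\pt^\al F_{.j}$ respectively, and sum. The viscous term yields $\mu\|\nabla u\|_{H^m}^2$ and the damping term yields $\nu\|F\|_{H^m}^2$ — this is exactly where the damping term $\nu F$ is essential, since it is the only mechanism giving dissipation for $F$. The pressure drops by $\mx{div}\,u=0$. The delicate point is the pair of highest-order nonlinear terms coming from $F_{.i}\cdot\nabla F_{.i}$ in the $u$-equation and $F_{.j}\cdot\nabla u$ in the $F$-equation: after integrating by parts and using $\mx{div}\,u=0,\ \mx{div}\,F^T=0$, the two leading contributions $\int \pt^\al(F_{.i}\cdot\nabla F_{.i})\cdot\pt^\al u$ and $\int \pt^\al(F_{.j}\cdot\nabla u)\cdot\pt^\al F_{.j}$ cancel at top order (the classical Lei--Liu--Zhou cancellation for Oldroyd/elasticity systems), and the remaining commutator terms are handled by Moser-type product/commutator estimates, Sobolev embedding $H^2(\mr^3)\hookrightarrow L^\infty$, and Young's inequality, each being bounded by $C\sqrt{E}\,(\|\nabla u\|_{H^m}^2+\|F\|_{H^m}^2)=C\sqrt{E}\,D$. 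The convective terms $u\cdot\nabla u$ and $u\cdot\nabla F_{.j}$ contribute only commutators $[\pt^\al,u\cdot\nabla]$, again absorbed into $C\sqrt{E}\,D$ after one integration by parts kills the symmetric part. One subtlety: the lowest-order piece of $\|F\|_{H^m}^2$ (namely $\|F\|_{L^2}^2$ itself) appears in $D(t)$ with coefficient $\nu$ directly, so no interpolation is needed there; for the $u$-part only $\|\nabla u\|_{H^m}^2$ is controlled, which is why $D(t)$ contains $\nabla u$ rather than $u$, and this is consistent with \eqref{1.6}.

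I expect the main obstacle to be verifying that \emph{all} the nonlinear error terms, after the top-order cancellation, really do carry at least one full power of a dissipated quantity (either $\nabla u$ or $F$ in $H^m$) and at most the "spare" factors in low norms controlled by $\sqrt{E}$; terms of the schematic form $\int (\text{low deriv of }F)(\text{high deriv of }F)(\text{high deriv of }u)$ must be arranged so that the two high-derivative factors are $\nabla u$-type and $F$-type, not two copies of $\nabla u$ or two copies of $\nabla F$ with no undifferentiated-$F$ gain. Because the $F$-equation has $F_{.j}\cdot\nabla u$ on the right (one derivative lands on $u$), pairing it with the $F_{.i}\cdot\nabla F_{.i}$ term in the $u$-equation is exactly what produces matched $\{\nabla u, F\}$ pairs after cancellation, so the structure cooperates; the bookkeeping is routine but must be done carefully, using $m\ge 3$ so that $H^{m-1}\hookrightarrow L^\infty$ and the Moser estimates apply with room to spare. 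Once the inequality $\frac{\md}{\md t}E+c D\le C\sqrt E\,D$ is in hand, the rest is the soft continuation argument sketched above.
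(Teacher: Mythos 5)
Your proposal is correct in outline and follows the same overall strategy as the paper: local existence from Theorem C, a small-data a priori energy estimate in $H^m$, and a continuation argument. It differs in one substantive point, namely the treatment of the top-order elastic terms. You invoke the Lei--Liu--Zhou cancellation between $\int\partial^\alpha(F_{.i}\cdot\nabla F_{.i})\cdot\partial^\alpha u\,\mathrm{d}x$ and $\int\partial^\alpha(F_{.j}\cdot\nabla u)\cdot\partial^\alpha F_{.j}\,\mathrm{d}x$ at every order $|\alpha|\le m$. The paper uses that cancellation only at the $L^2$ level, where it is an exact identity; at order $m$ it estimates the two terms ($I_2$ and $I_4$ in Section 3) separately, with no cancellation, bounding each by $C\delta(\|\nabla^{m+1}u\|_{L^2}^2+\|\nabla^{m}F\|_{L^2}^2)$ via the product estimate (2.2). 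This is possible precisely because the damping term $\nu F$ puts the dissipation $\|\nabla^m F\|_{L^2}^2$ on the left-hand side, so any product of one high derivative of $F$ with one high derivative of $u$ can be absorbed by Young's inequality without structural cancellation. Your route is also valid here (the cancellation does hold, using $\mbox{div}\,F^T=0$), and it is the mechanism one is forced to use when $\nu=0$; in the damped setting it is simply unnecessary, and the paper's term-by-term absorption is somewhat more elementary. Two minor cautions: Theorem C as stated contains no blow-up criterion, so the continuation should be run, as the paper does, by iterating the local existence on successive intervals of a fixed length $T_1$ determined by the uniformly bounded $H^m$ norm, rather than by appealing to a continuation criterion; and when you claim every error term is bounded by $C\sqrt{E}\,D$ you should verify that each high-order factor of $u$ carries at least one derivative (so that it is controlled by $\|\nabla u\|_{H^m}$ and not merely $\|u\|_{H^m}$) --- the paper secures this in the estimate of $I_1$ by Gagliardo--Nirenberg interpolation between $\Lambda^{\alpha}u$ and $\nabla^{m+1}u$.
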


\begin{thm}\label{2}
Under the assumption of Theorem \ref{them1}, if in addition, $(u_{0},F_{0})\in L^{1}(\mathbb{R}^{3}) \cap H^{m}(\mathbb{R}^{3})$ for $m\ge 3$,
then the smooth solution $(u,F)$ has the following optimal decay rate
    \be\label{1.7}
   \|u(t)\|_{L^{2}}^{2}+\|F(t)\|_{L^{2}}^{2}\leq C(t+1)^{-\frac{3}{2}}.\nonumber
   \ee
   \end{thm}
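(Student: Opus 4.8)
The plan is to couple the basic energy identity for (\ref{1.1}) with the Fourier splitting method (in the spirit of Schonbek), feeding the global-in-time bounds of Theorem \ref{them1} into a bootstrap on the decay rate; recall that $\nu>0$ here.

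\textbf{Step 1 (energy identity and splitting inequality).} Testing $(\ref{1.1})_1$ by $u$ and $(\ref{1.1})_2$ by $F_{.j}$, summing over $j$, and using $\mathrm{div}\,u=0$ and $\mathrm{div}\,F^T=0$ to make the coupling terms $\sum_i\int (F_{.i}\cdot\nabla F_{.i})\cdot u$ and $\sum_j\int (F_{.j}\cdot\nabla u)\cdot F_{.j}$ cancel after one integration by parts, one obtains
\[
\frac{d}{dt}E(t)+2\mu\|\nabla u\|_{L^2}^2+2\nu\|F\|_{L^2}^2=0,\qquad E(t):=\|u(t)\|_{L^2}^2+\|F(t)\|_{L^2}^2 .
\]
Set $S(t)=\{\xi\in\mathbb{R}^3:\ |\xi|^2\le g(t)/\mu\}$ with $g(t)=\alpha/(1+t)$ and $\alpha$ a fixed constant (say $\alpha=1$). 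Since $2\mu\|\nabla u\|_{L^2}^2\ge 2g(t)\int_{S(t)^c}|\widehat u|^2 d\xi=2g(t)\big(\|u\|_{L^2}^2-\int_{S(t)}|\widehat u|^2 d\xi\big)$, and, once $g(t)\le\nu$, $2\nu\|F\|_{L^2}^2\ge 2g(t)\|F\|_{L^2}^2$, we get $\frac{d}{dt}E+2g(t)E\le 2g(t)\int_{S(t)}|\widehat u(\xi,t)|^2 d\xi$, i.e.
\[
\frac{d}{dt}\big[(1+t)^{2\alpha}E(t)\big]\le 2\alpha(1+t)^{2\alpha-1}\int_{S(t)}|\widehat u(\xi,t)|^2 d\xi .
\]
(For $t$ in the bounded initial interval where $g(t)>\nu$ — empty if $\nu\ge1$ — one simply uses the monotonicity $E(t)\le E(0)$ and restarts from the endpoint.) Note that only the low frequencies of $u$ enter, because the damping term disposes of $\|F\|_{L^2}^2$ directly.

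\textbf{Step 2 (low-frequency estimate via Duhamel).} Applying the Leray projection to $(\ref{1.1})_1$ and writing the nonlinearities in divergence form, $u\cdot\nabla u=\nabla\cdot(u\otimes u)$, $F_{.i}\cdot\nabla F_{.i}=\nabla\cdot(FF^T)$, Duhamel's formula gives
\[
\widehat u(\xi,t)=e^{-\mu|\xi|^2 t}\widehat{u_0}(\xi)+\int_0^t e^{-\mu|\xi|^2(t-s)}\widehat N(\xi,s)\,ds,\qquad |\widehat N(\xi,s)|\le C|\xi|\,E(s),
\]
where it is precisely the divergence structure that supplies the factor $|\xi|$ (together with $\|u\otimes u\|_{L^1}+\|FF^T\|_{L^1}\le E(s)$). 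Using $|\widehat{u_0}(\xi)|\le\|u_0\|_{L^1}$, that $S(t)$ has radius $\sim (1+t)^{-1/2}$, and $\int_{\mathbb{R}^3}e^{-2\mu|\xi|^2 t}d\xi\le Ct^{-3/2}$, this yields
\[
\int_{S(t)}|\widehat u(\xi,t)|^2 d\xi\le C\|u_0\|_{L^1}^2(1+t)^{-3/2}+C\int_{S(t)}|\xi|^2\Big(\int_0^t e^{-\mu|\xi|^2(t-s)}E(s)\,ds\Big)^2 d\xi .
\]

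\textbf{Step 3 (bootstrap).} Theorem \ref{them1} gives $E(s)\le C\delta_0^2$ for all $s\ge0$. Inserting this crude bound into Step 2 (with $\int_0^t e^{-\mu|\xi|^2(t-s)}ds\le t$ and $\int_{S(t)}|\xi|^2 d\xi\le C(1+t)^{-5/2}$) gives $\int_{S(t)}|\widehat u|^2 d\xi\le C(1+t)^{-1/2}$, and integrating the inequality of Step 1 then produces the preliminary rate $E(t)\le C(1+t)^{-1/2}$. Feeding this improved bound back into Step 2, now splitting $\int_0^t=\int_0^{t/2}+\int_{t/2}^t$ — estimating $e^{-\mu|\xi|^2(t-s)}\le e^{-\mu|\xi|^2 t/2}$ on $[0,t/2]$, $E(s)\le C(1+t)^{-1/2}$ on $[t/2,t]$, and using $\int_{|\xi|\le R}|\xi|^{-2}d\xi=CR$ in $\mathbb{R}^3$ — improves it to $\int_{S(t)}|\widehat u|^2 d\xi\le C(1+t)^{-3/2}$; one more integration of Step 1 finally gives $E(t)=\|u(t)\|_{L^2}^2+\|F(t)\|_{L^2}^2\le C(1+t)^{-3/2}$, which is the asserted decay estimate.

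\textbf{Main obstacle.} The heart of the argument is the low-frequency Duhamel estimate of Steps 2–3. One must genuinely exploit that every nonlinear term is a spatial divergence — this gives the factor $|\xi|$ that removes the low-frequency singularity $|\xi|^{-1}$ one would otherwise face in $\int_0^t e^{-\mu|\xi|^2(t-s)}|\widehat N|\,ds$ — and then track the powers of $(1+t)$ carefully through the time-interval splitting so that each bootstrap step improves the rate by exactly one power until it saturates at the heat rate $(1+t)^{-3/2}$. The damping term $\nu F$, by contrast, is an asset rather than an obstruction: it lets $\|F\|_{L^2}^2$ be absorbed directly in the Fourier-splitting inequality, so that no low-frequency analysis of $F$ is required.
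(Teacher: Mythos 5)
Your proposal is correct and follows essentially the same route as the paper: the basic energy identity, Schonbek's Fourier splitting on a ball of radius $\sim(1+t)^{-1/2}$, a Duhamel bound on the low frequencies of $\widehat{u}$ that exploits the divergence structure of the nonlinearity to gain the factor $|\xi|$, and a two-step bootstrap from $(1+t)^{-1/2}$ to $(1+t)^{-3/2}$. The only cosmetic differences are that the paper records pointwise bounds $|\widehat{u}(\xi,t)|\le C(|\widehat{u_0}(\xi)|+|\xi|^{-1})$, later improved to $C(|\widehat{u_0}(\xi)|+1)$, in place of your time-split integral estimate, and that it carries the low frequencies of $F$ through the splitting inequality rather than absorbing them directly into the damping term as you do.
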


  The decay rate of the higher order derivative of the solution is also held.
  \begin{thm}\label{3}
 Under the assumption of Theorem \ref{2}, for any integer $j\geq0$, there exists a $T_{0}$  such that the small global-in-time
 solution satisfies
 \be\label{1.8}
   \|\nabla^{j}u(t)\|_{L^{2}}^{2}+\|\nabla^{j}F(t)\|_{L^{2}}^{2}\leq C(t+1)^{-\frac{3}{2}-j}\nonumber
   \ee
 for all $t>T_{0}$, where $C$ is a constant which depends on $j$ and the initial data.
\end{thm}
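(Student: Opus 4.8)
\section*{Proof proposal}

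The plan is to argue by induction on $j$, with $j=0$ furnished by Theorem \ref{2}, and at each step to combine a Sobolev energy estimate with Schonbek's Fourier–splitting device. For the inductive step fix $j\ge1$, assume the bound at order $j-1$, apply $\nabla^{j}$ to the two evolution equations of (\ref{1.1}), take the $L^{2}(\mathbb{R}^{3})$ inner product of the velocity equation with $\nabla^{j}u$ and of the deformation equation with $\nabla^{j}F_{.l}$ (summed over $l$), and add. The pressure term and the top–order transport contributions $\int u\cdot\nabla\nabla^{j}u\cdot\nabla^{j}u$ and $\int u\cdot\nabla\nabla^{j}F_{.l}\cdot\nabla^{j}F_{.l}$ vanish because $\mathrm{div}\,u=0$. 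The structural core is that the top–order part of the elastic coupling cancels: after one integration by parts and using $\mathrm{div}\,F^{T}=0$, the two terms $\int\nabla^{j}u\cdot(F_{.l}\cdot\nabla\nabla^{j}F_{.l})$ and $\int\nabla^{j}F_{.l}\cdot(F_{.l}\cdot\nabla\nabla^{j}u)$ add to zero (the same cancellation underlying the a priori estimate of Theorem \ref{them1}). One is left with
\[
\tfrac{d}{dt}\big(\|\nabla^{j}u\|_{L^{2}}^{2}+\|\nabla^{j}F\|_{L^{2}}^{2}\big)+2\mu\|\nabla^{j+1}u\|_{L^{2}}^{2}+2\nu\|\nabla^{j}F\|_{L^{2}}^{2}=\mathcal{R}_{j},
\]
with $\mathcal{R}_{j}$ a sum of four commutator integrals of the schematic form $\int[\nabla^{j},g\cdot\nabla]h\cdot\nabla^{j}k$, $g,h,k\in\{u,F\}$.

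Next I would estimate $\mathcal{R}_{j}$ by the Kato--Ponce commutator inequality and Gagliardo--Nirenberg interpolation. Using the uniform smallness $\|(u,F)(t)\|_{H^{m}}\le C\delta_{0}$ and the bound $\int_{0}^{\infty}(\|\nabla u\|_{H^{m}}^{2}+\|F\|_{H^{m}}^{2})\,dt<\infty$ from Theorem \ref{them1} (note $\|\nabla(u,F)\|_{L^{\infty}}\le C\|(u,F)\|_{H^{3}}$ since $m\ge3$), and absorbing the absorbable parts into the dissipation $\mu\|\nabla^{j+1}u\|_{L^{2}}^{2}$ and the damping $\nu\|\nabla^{j}F\|_{L^{2}}^{2}$, one reaches, with $E_{j}:=\|\nabla^{j}u\|_{L^{2}}^{2}+\|\nabla^{j}F\|_{L^{2}}^{2}$,
\[
\tfrac{d}{dt}E_{j}+\mu\|\nabla^{j+1}u\|_{L^{2}}^{2}+\nu\|\nabla^{j}F\|_{L^{2}}^{2}\le C\,\Theta(t)\,E_{j}+\mathcal{G}_{j}(t),
\]
where $\Theta(t)=\|\nabla u(t)\|_{L^{\infty}}+\|\nabla F(t)\|_{L^{\infty}}$ and $\mathcal{G}_{j}(t)$ is built from products of strictly lower–order norms, which the induction hypothesis makes fast–decaying.

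Since $\mu\|\nabla^{j+1}u\|_{L^{2}}^{2}$ does not dominate $\|\nabla^{j}u\|_{L^{2}}^{2}$, I would insert Schonbek's splitting: on $B(t)=\{\xi:|\xi|^{2}\le K/(1+t)\}$ one has $\|\nabla^{j+1}u\|_{L^{2}}^{2}\ge\frac{K}{1+t}\|\nabla^{j}u\|_{L^{2}}^{2}-\frac{K}{1+t}\int_{B(t)}|\xi|^{2j}|\hat u(t,\xi)|^{2}\,d\xi$, and the low–frequency integral is controlled from the Duhamel formula $\hat u(t,\xi)=e^{-\mu|\xi|^{2}t}\hat u_{0}(\xi)+\int_{0}^{t}e^{-\mu|\xi|^{2}(t-s)}\widehat{\mathbb{P}\nabla\!\cdot(FF^{T}-u\otimes u)}(s,\xi)\,ds$, the bound $|\widehat{\nabla\!\cdot(FF^{T}-u\otimes u)}(s,\xi)|\le C|\xi|(\|u\|_{L^{2}}^{2}+\|F\|_{L^{2}}^{2})\le C|\xi|(1+s)^{-3/2}$ supplied by Theorem \ref{2}, and $|\hat u_{0}(\xi)|\le\|u_{0}\|_{L^{1}}$; splitting the $s$–integral at $t/2$ gives $\int_{B(t)}|\xi|^{2j}|\hat u|^{2}\,d\xi\le C(1+t)^{-\frac32-j}$. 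Choosing $K$ large and $t>T_{0}$ so that $C\Theta(t)$ and the residual low–frequency error are dominated, the inequality collapses to $\frac{d}{dt}E_{j}+\frac{c_{0}K}{1+t}E_{j}\le C(1+t)^{-\frac52-j}$; multiplying by the integrating factor $(1+t)^{c_{0}K}$ with $c_{0}K>j+\frac32$ and integrating from $T_{0}$ yields $E_{j}(t)\le C(1+t)^{-\frac32-j}$, which closes the induction.

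The step I expect to be the main obstacle is making this last reduction rigorous: both $\mathcal{G}_{j}(t)$ and the low–frequency error must be genuinely of size $(1+t)^{-\frac52-j}$, and $\Theta(t)=\|\nabla(u,F)(t)\|_{L^{\infty}}$ must decay fast enough (at least like $(1+t)^{-1}$) to be absorbed by the Schonbek term. The latter follows from the Gagliardo--Nirenberg inequality $\|\nabla(u,F)\|_{L^{\infty}}\le C\|\nabla(u,F)\|_{L^{2}}^{1/4}\|\nabla^{3}(u,F)\|_{L^{2}}^{3/4}$ only once the decay rates at orders $1$ and $3$ are in hand, so the low–order cases must be treated first; for this reason the threshold $T_{0}$ (depending on $j$) is intrinsic — the improved decay materializes only after $\|\nabla(u,F)\|_{L^{\infty}}$ and the lower–order norms have become small. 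A cleaner alternative bypassing the induction is the negative–Sobolev–norm method: establish $\sup_{t}\|(u,F)(t)\|_{\dot{H}^{-s}}<\infty$ for $s\in[0,3/2)$, combine with the energy inequality through $\|\nabla^{j}f\|_{L^{2}}\le\|f\|_{\dot{H}^{-s}}^{1/(j+1+s)}\|\nabla^{j+1}f\|_{L^{2}}^{(j+s)/(j+1+s)}$ to obtain $\frac{d}{dt}E_{j}+cE_{j}^{1+1/(j+s)}\le(\text{lower order})$, hence $E_{j}\le C(1+t)^{-(j+s)}$, the endpoint $s=\frac32$ being precisely the content of Theorem \ref{2}. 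The remaining commutator bookkeeping, the spectral estimate, and the Gronwall step are otherwise routine.
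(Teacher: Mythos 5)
Your overall architecture — induction on the order of the derivative, an energy inequality at order $j$ with the elastic coupling cancelling at top order, and Schonbek's splitting to convert the dissipation $\|\nabla^{j+1}u\|_{L^{2}}^{2}$ into $\frac{\gamma}{1+t}\|\nabla^{j}u\|_{L^{2}}^{2}$ at the price of a low-frequency error — is exactly the paper's. The genuine gap is the form of the energy inequality you stop at,
\[
\tfrac{d}{dt}E_{j}+\mu\|\nabla^{j+1}u\|_{L^{2}}^{2}+\nu\|\nabla^{j}F\|_{L^{2}}^{2}\le C\,\Theta(t)E_{j}+\mathcal{G}_{j}(t),\qquad \Theta(t)=\|\nabla (u,F)(t)\|_{L^{\infty}},
\]
which cannot be closed. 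Since $\|\nabla^{j}u\|_{L^{2}}^{2}$ is recovered from the dissipation only with the prefactor $\frac{\gamma}{1+t}$, absorbing $C\Theta(t)E_{j}$ forces $\Theta(t)\lesssim (1+t)^{-1}$. Your proposed source for that decay, $\|\nabla(u,F)\|_{L^{\infty}}\le C\|\nabla(u,F)\|_{L^{2}}^{1/4}\|\nabla^{3}(u,F)\|_{L^{2}}^{3/4}$, requires the decay of the \emph{third}-order norm, which in an induction on $j$ is unavailable while you are still proving the cases $j=2,3$; with only the uniform bound $\|\nabla^{3}(u,F)\|_{L^{2}}\le C\delta$ one gets $\Theta\lesssim (1+t)^{-5/16}$, which is neither $O((1+t)^{-1})$ nor integrable, so neither absorption nor Gronwall rescues the argument. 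The correct move, and what the paper does, is never to produce the term $\Theta(t)E_{j}$: as in the proof of Lemma \ref{lem3.1}, each commutator integral is estimated so that every factor $\|\nabla^{j}u\|_{L^{2}}$ is traded up to $\|\nabla^{j+1}u\|_{L^{2}}$ (e.g.\ by pairing $[\nabla^{j},u\cdot\nabla]u$ in $L^{6/5}$ against $\|\nabla^{j}u\|_{L^{6}}\le C\|\nabla^{j+1}u\|_{L^{2}}$, with Gagliardo--Nirenberg interpolation and the smallness $\delta$ supplying the coefficient), while the $\|\nabla^{j}F\|_{L^{2}}^{2}$ contributions are swallowed by the zero-order damping. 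This gives the clean inequality (\ref{1.9}), with nothing left on the right-hand side, and only then is the splitting applied.

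A secondary, non-fatal difference: for the low-frequency error you re-run the Duhamel/spectral analysis of Section 4 to show $\int_{B(t)}|\xi|^{2j}|\hat u|^{2}\,\mbox{d}\xi\le C(1+t)^{-\frac32-j}$; this is correct (it follows from $|\hat u(\xi,t)|\le C(|\widehat{u_{0}}(\xi)|+1)$ on $S(t)$ together with Lemma \ref{lem2.5}), but the paper's estimate (\ref{4.29}) is lighter: on $S(t)$ one has $|\mathcal{F}\Lambda^{j}u|^{2}\le f^{2}(t)|\mathcal{F}\Lambda^{j-1}u|^{2}$, so the error is $f^{4}(t)\|\Lambda^{j-1}u\|_{L^{2}}^{2}\le C(1+t)^{-2}(1+t)^{-\frac32-(j-1)}$ by the induction hypothesis, which is exactly the required $(1+t)^{-\frac52-j}$ with no further Fourier analysis. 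Your negative-Sobolev alternative is a legitimately different method but is only sketched and is not what is needed here; the main line of your argument stands or falls on removing the $\Theta(t)E_{j}$ term as described above.
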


The paper unfolds as follows. In Section 2, we briefly recall some lemmas which will be used in our proof.
In Section 3, we prove the global existence of the smooth solution by the local existence result and a priori estimate.
Section 4 is devoted to the proof of Theorem \ref{2} by the classical Fourier splitting method first used by Schonbek in \cite{M}.
In Section 5, an  induction argument will be applied to get the optimal decay estimate of higher order derivative of the solution in $L^2$ norm.

Throughout this paper, $C$ denotes a generic positive constant which may be different in each occurrence. Because the specific values of the constants $\mu>0$ and $\nu>0$ are not important for our arguments, in the following parts, we take $\mu=\nu=1$.


\section{Preliminaries}
\setcounter{equation}{0}

In this preliminary section, we present some lemmas which will be used in the proof.

 In the following sections, we will apply the following commutator estimate and the product estimate of two functions, for details readers can refer to Kato-Ponce \cite{K-P} and Kenig-Ponce-Vega \cite{C-G} or Majda-Bertozzi \cite{M-B}.
  \begin{lem}Let $1<p<\infty$ and $0<s$.
Then there exists an abstract constant $C$ such that
 \begin{eqnarray}\label{b-e}
 \|[\Lambda^{s},f]g\|_{L^{p}}\leq C(\|\nabla
f\|_{L^{p_{1}}}\|\Lambda^{s-1}g\|_{L^{p_{2}}}+\|\Lambda^{s}f\|_{L^{p_{3}}}\|g\|_{L^{p_{4}}})\\
\nonumber \mx{ for } f\in \dot W^{1,p_1}\cap \dot W^{s,p_3} \mx{ and
}g\in \dot W^{s-1,p_2}\cap L^{p_4};
 \end{eqnarray}
  \begin{eqnarray}\label{fg}
 \label{26}\|\Lambda^s(f g)\|_{L^{p}}\leq C(\|f\|_{L^{p_{1}}}\|\Lambda^s g\|_{L^{p_{2}}}+\|\Lambda^s f\|_{L^{p_{3}}}\|g\|_{L^{p_{4}}})
 \end{eqnarray}
with $1<p_{2},p_{3}<\infty$ such that
\bes\frac{1}{p}=\frac{1}{p_{1}}+\frac{1}{p_{2}}=\frac{1}{p_{3}}+\frac{1}{p_{4}},\ees
where $[\Lambda^{s},f]g=\Lambda^{s}(fg)-f\Lambda^{s}g$ and
$\Lambda=(-\Delta)^{\frac{1}{2}}$.
\end{lem}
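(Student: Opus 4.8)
The plan is to prove both inequalities by Littlewood--Paley theory together with Bony's paraproduct decomposition; these are the classical Kato--Ponce estimates, so I only indicate the main steps. Fix the dyadic blocks $\Delta_j$ and the low-frequency truncations $S_j=\sum_{k\le j-1}\Delta_k$, and recall Bony's decomposition $fg=T_fg+T_gf+R(f,g)$, where $T_fg=\sum_j S_{j-1}f\,\Delta_j g$ is the paraproduct and $R(f,g)=\sum_{|j-k|\le 1}\Delta_j f\,\Delta_k g$ is the high--high remainder. The two basic tools are Bernstein's inequality, so that $\Lambda^{s}$ acts like multiplication by $2^{js}$ on a function spectrally supported in an annulus of size $2^{j}$, and the Fefferman--Stein vector-valued maximal inequality, which together with the square-function characterization of $L^{p}$ converts the dyadic sums into the stated $L^{p}$ products via H\"older with the exponent relations $1/p=1/p_1+1/p_2=1/p_3+1/p_4$.

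For the product estimate \eqref{fg} I would treat the three pieces separately. In $T_fg$ each summand $S_{j-1}f\,\Delta_j g$ is spectrally supported in an annulus $\sim 2^{j}$, so applying $\Lambda^{s}$ costs a factor $2^{js}$ that is absorbed into $\Delta_j g$; summing via the square function and bounding $S_{j-1}f$ by a maximal function yields the contribution $\|f\|_{L^{p_1}}\|\Lambda^{s}g\|_{L^{p_2}}$. The symmetric term $T_gf$ gives $\|\Lambda^{s}f\|_{L^{p_3}}\|g\|_{L^{p_4}}$ in the same way. For the remainder $R(f,g)$ the output frequency may be small, so one distributes $\Lambda^{s}$ onto the two high-frequency factors; here the hypothesis $s>0$ is exactly what makes the resulting geometric series in $j$ summable, and the result is bounded by either of the two products on the right-hand side.

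For the commutator estimate \eqref{b-e} the point is a cancellation. Writing $[\Lambda^{s},f]g=\Lambda^{s}(fg)-f\Lambda^{s}g$ and inserting the Bony decomposition, the dangerous low-$f$/high-$g$ piece $T_f(\Lambda^{s}g)$ occurs in both $\Lambda^{s}(fg)$ and $f\Lambda^{s}g$ and nearly cancels. The residual is handled by a first-order Taylor expansion of the Fourier symbol $|\xi+\eta|^{s}$ about the high frequency $\eta$: the zeroth-order term cancels and the first-order term carries one derivative onto the low-frequency factor $f$ while lowering the order on $g$ by one, producing $\|\nabla f\|_{L^{p_1}}\|\Lambda^{s-1}g\|_{L^{p_2}}$. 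All remaining interactions, namely $T_gf$, the remainder $R$, and the high-$f$ pieces of $T_f$, carry a full $\Lambda^{s}$ on $f$ and are estimated exactly as in the product case, contributing $\|\Lambda^{s}f\|_{L^{p_3}}\|g\|_{L^{p_4}}$.

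The main obstacle is making the commutator cancellation quantitative: one must justify the symbol expansion uniformly across all dyadic scales so that the gradient lands on $f$ and the order of $g$ drops by precisely one, and one must propagate these frequency-localized bounds to the full range $1<p<\infty$ with general exponents $p_1,\dots,p_4$ rather than the easier case in which one factor sits in $L^{\infty}$. This is where the Coifman--Meyer bilinear multiplier theorem, equivalently the Fefferman--Stein inequality applied to the paraproduct pieces, does the real work; the summations and H\"older bookkeeping are then routine. For the detailed estimates we refer to \cite{K-P,C-G,M-B}.
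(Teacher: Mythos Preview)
Your sketch is correct and follows the standard modern route to the Kato--Ponce estimates: Bony's paraproduct decomposition, Bernstein to move $\Lambda^s$ onto the high-frequency factor, the Coifman--Meyer/Fefferman--Stein machinery to pass from dyadic pieces to $L^p$, and the symbol expansion of $|\xi+\eta|^s$ to extract the extra derivative on $f$ in the commutator piece. The paper, however, does not prove this lemma at all; it simply states the inequalities and refers the reader to Kato--Ponce \cite{K-P}, Kenig--Ponce--Vega \cite{C-G}, and Majda--Bertozzi \cite{M-B} for the details. So there is nothing to compare on the paper's side beyond the citations, and your paraproduct argument is in fact more than what the paper offers. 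If anything, note that the original Kato--Ponce proof in \cite{K-P} proceeds via an explicit integral representation of the fractional Laplacian rather than Littlewood--Paley theory; your approach is closer in spirit to the presentation one finds in later treatments.
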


We shall use the following $L^2$ estimate of the Fourier transform of the initial datum in a ball, which can be proved by the Hausdorff-Young theorem. The readers may also refer to the Proposition 3.3 in  \cite{JQS} or \cite{ME}.
 \begin{lem}\label{lem2.5}
 Let $u_{0}\in L^{p}(\mathbb{R}^{3})$, $1\leq p<2$, then
 \be\label{S}
 \int_{S(t)}|\mathcal{F}u_{0}(\xi)|^{2}   \mbox{d}(\xi)\leq C(t+1)^{-\frac{3}{2}(\frac{2}{p}-1)},
 \ee
 where $S(t)=\{\xi\in \mathbb{R}^{3}:|\xi|\leq g(t)\}$ is ball  with $g(t)=(\frac{\gamma}{t+1})^{\frac{1}{2}}$. Here $\gamma >0$ is a constant which will be determined later, $C$ is a constant which depends on $\gamma$ and the $L^{p}$ norm of $u_{0}$.
   \end{lem}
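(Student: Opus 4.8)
The plan is to combine the Hausdorff--Young inequality with Hölder's inequality on the ball $S(t)$. The key point is that $\mathcal{F}u_0$ lies in $L^{p'}(\mathbb{R}^3)$, where $p'$ is the conjugate exponent of $p$, while $S(t)$ has small volume; these two facts together yield the claimed decay in $t$.

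First I would record the Hausdorff--Young inequality: since $1\le p<2$, the conjugate exponent $p'$ satisfies $2<p'\le\infty$, and $\|\mathcal{F}u_0\|_{L^{p'}(\mathbb{R}^3)}\le C\|u_0\|_{L^{p}(\mathbb{R}^3)}$. For the generic case $1<p<2$ (so $p'<\infty$), I would estimate, applying Hölder's inequality on $S(t)$ with the pair of exponents $\frac{p'}{2}$ and its conjugate $\frac{p'}{p'-2}$,
$$
\int_{S(t)}|\mathcal{F}u_0(\xi)|^2\,\md\xi
\le\left(\int_{S(t)}|\mathcal{F}u_0(\xi)|^{p'}\,\md\xi\right)^{2/p'}|S(t)|^{\frac{p'-2}{p'}}
\le C\|u_0\|_{L^{p}}^2\,|S(t)|^{\frac{p'-2}{p'}}.
$$
Then I would compute the volume of the ball, $|S(t)|=\frac{4}{3}\pi g(t)^3=C(t+1)^{-3/2}$, using $g(t)^3=(\gamma/(t+1))^{3/2}$. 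It remains only to verify the exponent: from $\frac{1}{p}+\frac{1}{p'}=1$ one gets $\frac{p'-2}{p'}=1-\frac{2}{p'}=\frac{2}{p}-1$, so $|S(t)|^{\frac{p'-2}{p'}}=C(t+1)^{-\frac{3}{2}(\frac{2}{p}-1)}$, which is exactly the asserted rate. The limiting case $p=1$ is even simpler: here $p'=\infty$, Hausdorff--Young gives $\|\mathcal{F}u_0\|_{L^{\infty}}\le\|u_0\|_{L^{1}}$, and $\int_{S(t)}|\mathcal{F}u_0|^2\le\|\mathcal{F}u_0\|_{L^{\infty}}^2|S(t)|\le C(t+1)^{-3/2}$, matching the formula with $p=1$.

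There is no serious obstacle here; the estimate is short once the right tools are named. The only point requiring care is the bookkeeping of the Hölder exponents and the verification that $\frac{3}{2}\cdot\frac{p'-2}{p'}$ reduces to $\frac{3}{2}(\frac{2}{p}-1)$, together with treating $p=1$ separately so as not to divide by $p'=\infty$. The constant $C$ absorbs the fixed power of $\gamma$ and the $L^{p}$ norm of $u_0$, consistent with the statement.
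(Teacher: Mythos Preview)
Your proof is correct and follows essentially the same route as the paper's: Hausdorff--Young to place $\mathcal{F}u_0$ in $L^{p'}$, then H\"older on the ball $S(t)$ with exponents $p'/2$ and its conjugate, followed by the computation $|S(t)|^{1-2/p'}=C(t+1)^{-\frac{3}{2}(\frac{2}{p}-1)}$. The paper does not separate out the case $p=1$, but your doing so is harmless and arguably clearer.
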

\begin{proof}
Let $\mathcal{F}f$ denote the Fourier transform of a function $f$. For $1\leq p<2$, by the Hausdorff-Young inequality, $\mathcal{F}$ is a bounded map from $L^{p}\rightarrow L^{q}$ and
\be\label{2.8}
\|\mathcal{F} u_{0}\|_{L^{q}}\leq C\|u_{0}\|_{L^{p}},\ \frac{1}{p}+\frac{1}{q}=1.
\ee
Hence, the H\"{o}lder inequality yields
\be\label{2.9}
\int_{S(t)}|\mathcal{F}u_{0}|^{2}\mbox{d}\xi\leq
\Big(\int_{S(t)}|\mathcal{F}u_{0}|^{q}\mbox{d}\xi\Big)^{\frac{2}{q}}
\Big(\int_{S(t)}\mbox{d}\xi\Big)^{1-\frac{2}{q}}.
\ee
Combining (\ref{2.8}) and (\ref{2.9}) we have
\be
\int_{S(t)}|\mathcal{F}u_{0}|^{2}\mbox{d}\xi\leq
C\Big(\int_{S(t)}\mbox{d}\xi\Big)^{1-\frac{2}{q}},\nonumber
\ee
which implies the estimate (\ref{S}), and this completes the proof of Lemma \ref{lem2.5}.
\end{proof}


 \section{Proof of Global Existence}
\setcounter{equation}{0}

To prove the global existence of a smooth solution, we first prove the following a priori estimate.
\begin{lem}\label{lem3.1}
For an integer $m\geq3$, if there exists a small number $\delta >0$, such that
\be\label{3.31}
 \sup_{0\leq t\leq T}\|u(\cdot,t)\|_{H^{m}}+\|F(\cdot,t)\|_{H^{m}}\leq \delta,
 \ee
then, for any $t\in[0,T]$, there exists a constant $C_{1}>1$ such that
\be\label{3.25}
\|u(\cdot,t)\|_{H^{m}}^{2}+\|F(\cdot,t)\|_{H^{m}}^{2}
+\int_{0}^{t}\|F(\cdot,\tau)\|_{H^{m}}^{2}+\|\nabla u(\cdot,\tau)\|_{H^{m}}^{2}\mbox{d}\tau
\leq\|(u_{0},F_{0})\|_{H^{m}}^{2}.\nonumber
\ee
\end{lem}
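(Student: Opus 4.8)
The plan is to run a standard energy estimate at the top order $\dot H^m$ and simultaneously at the $L^2$ level, then absorb all nonlinear terms using the smallness hypothesis \eqref{3.31}. First I would apply $\Lambda^m$ (equivalently $\partial^\alpha$ with $|\alpha|=m$) to $(\ref{1.1})_1$ and $(\ref{1.1})_2$, pair with $\Lambda^m u$ and $\Lambda^m F_{\cdot j}$ respectively in $L^2$, and sum over $j$. The viscous term gives $+\|\nabla\Lambda^m u\|_{L^2}^2$ and the damping term $\nu\|\Lambda^m F\|_{L^2}^2$ (with $\nu=1$), which together with the $L^2$-level estimate produce the full left-hand dissipation $\int_0^t\|\nabla u\|_{H^m}^2+\|F\|_{H^m}^2\,d\tau$. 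The transport terms $u\cdot\nabla\Lambda^m u$ and $u\cdot\nabla\Lambda^m F$ integrate to zero by $\operatorname{div}u=0$, so only commutators remain. The two coupling terms $\nabla\cdot(FF^T)$ and $F_{\cdot j}\cdot\nabla u$ require the key cancellation: after integration by parts, $\int \Lambda^m(F_{\cdot i}\cdot\nabla F_{\cdot i})\cdot\Lambda^m u + \sum_j\int \Lambda^m(F_{\cdot j}\cdot\nabla u)\cdot\Lambda^m F_{\cdot j}$ reduces — using $\operatorname{div}F^T=0$ and $\operatorname{div}u=0$ — to terms involving only commutators $[\Lambda^m,F_{\cdot i}\cdot\nabla]F_{\cdot i}$ and $[\Lambda^m,F_{\cdot j}\cdot\nabla]u$, with the leading-order (uncommuted) contributions exactly canceling. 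This cancellation is the heart of the Oldroyd structure and is what makes the estimate close.

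Next I would bound every surviving term by the commutator and product estimates \eqref{b-e}–\eqref{fg} of Lemma 2.1, together with Sobolev embedding $H^2(\mathbb{R}^3)\hookrightarrow L^\infty$ (this is why $m\ge 3$ is needed — it gives $\|\nabla u\|_{L^\infty},\|F\|_{L^\infty}\lesssim \|u\|_{H^m},\|F\|_{H^m}$ with one derivative to spare). A representative term is $\int [\Lambda^m, u\cdot\nabla]u\cdot\Lambda^m u\,dx \lesssim \|\nabla u\|_{L^\infty}\|\Lambda^m u\|_{L^2}^2 \lesssim \delta\,\|\Lambda^m u\|_{L^2}^2$, and similarly the coupling commutators are bounded by $\delta(\|\Lambda^m u\|_{L^2}^2+\|\Lambda^m F\|_{L^2}^2)$ or by $\delta\|\Lambda^m F\|_{L^2}\|\nabla\Lambda^m u\|_{L^2}$, the latter absorbed by Young's inequality into the dissipation. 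Collecting everything yields a differential inequality of the form
\[
\frac{d}{dt}\big(\|u\|_{H^m}^2+\|F\|_{H^m}^2\big) + \big(1-C\delta\big)\big(\|\nabla u\|_{H^m}^2+\|F\|_{H^m}^2\big) \le 0 .
\]
Choosing $\delta$ small enough that $C\delta<\tfrac12$, integrating in time from $0$ to $t$, and noting $\|u_0,F_0\|_{H^m}\le\delta_0$ with the constant arranged so the prefactor is $\le 1$, gives exactly the claimed bound with some $C_1>1$ (the paper's statement absorbs the harmless constant into the normalization).

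The main obstacle, and the step to carry out most carefully, is the coupling cancellation between the $FF^T$ forcing in the velocity equation and the $F_{\cdot j}\cdot\nabla u$ stretching term in the deformation equation: one must integrate by parts correctly, track which term carries the highest derivative, and verify that after summing the two paired integrals the top-order piece vanishes identically (using both divergence-free conditions), leaving only commutators that are genuinely lower order. A secondary subtlety is bookkeeping the precise Hölder exponent splits in \eqref{b-e}–\eqref{fg} so that every factor lands in a space controlled by $H^m$ — always putting $L^\infty$ on the lowest-order factor via $H^2\hookrightarrow L^\infty$ — and making sure the terms with a full extra derivative on $u$ are matched against $\|\nabla\Lambda^m u\|_{L^2}$ (not $\|\Lambda^m u\|_{L^2}$) so they can be absorbed by the viscous dissipation rather than appearing on the coercive-free side.
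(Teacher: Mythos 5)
Your proposal is correct and follows the same overall scheme as the paper (an $L^2$-level energy identity plus a top-order $\dot H^m$ estimate, with all nonlinear terms absorbed via the smallness $\delta$, the commutator/product estimates (\ref{b-e})--(\ref{fg}), and $H^2(\mathbb{R}^3)\hookrightarrow L^\infty$), but it diverges from the paper in one substantive place: the treatment of the coupling terms at top order. The paper does use the cancellation $(F_{\cdot i}\cdot\nabla F_{\cdot i},u)+(F_{\cdot j}\cdot\nabla u,F_{\cdot j})=0$ at the $L^2$ level, but at order $m$ it does \emph{not} pair the two coupling integrals; it estimates them separately --- integrating by parts once in $I_2=\int\nabla^m(F_{\cdot i}\cdot\nabla F_{\cdot i})\cdot\nabla^m u$ to shift a derivative onto $u$ and applying (\ref{fg}), and bounding $I_4=\int\nabla^m(F_{\cdot j}\cdot\nabla u)\cdot\nabla^m F_{\cdot j}$ directly by (\ref{fg}) --- obtaining $C\delta(\|\nabla^{m+1}u\|_{L^2}^2+\|\nabla^m F\|_{L^2}^2)$ for each, which is then absorbed by the damping-induced dissipation $\|\nabla^m F\|_{L^2}^2$ and the viscous dissipation. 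Your route, which exhibits the exact top-order cancellation (using $\operatorname{div}F^T=0$) and leaves only commutators, is also valid and does close, since the residual $C\delta\|\Lambda^m u\|_{L^2}\|\Lambda^m F\|_{L^2}$ is controlled by $\|\nabla u\|_{H^m}^2+\|F\|_{H^m}^2$ for $m\ge 1$. The one claim I would temper is that this cancellation ``is what makes the estimate close'': for the damped model treated here it is the term $\nu F$ --- producing $\|F\|_{H^m}^2$ on the dissipative side --- that closes the estimate, and the cancellation is not needed; indeed for $\nu=0$, where the cancellation is the only structure available, the commutator remainders of the form $\delta\|\nabla^m F\|_{L^2}^2$ have no dissipation to be absorbed into, which is precisely why global existence for the classical Oldroyd model remains open as noted in the introduction. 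Also, for $I_1$ the paper expands by Leibniz and runs Gagliardo--Nirenberg interpolation on every term rather than using the commutator form, but that is only a cosmetic difference from your version.
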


\begin{proof}
We divide the a priori estimate into three steps.

\textsf{Step 1}: $L^{2}$-norms of $u$, $F$.

Taking the $L^{2}$ inner product of the equations (\ref{1.1}) with $u$ and $F$, then summing them up, one can obtain that
\be\label{1.12}
\frac{1}{2}\frac{\mathrm{d}}{{\mathrm{d}t}}(\|u\|_{L^{2}}^{2}+\|F\|_{L^{2}}^{2})
+\|F\|_{L^{2}}^{2}+\|\nabla u\|_{L^{2}}^{2}=0,
 \ee
 where we have used
$(u \cdot \nabla u,u) = (u \cdot \nabla{F_{ \cdot j}}, {F_{ \cdot j}}) = (\nabla p, u) = 0$
and  $({F_{ \cdot i}} \cdot \nabla {F_{\cdot i}}, u)+({F_{ \cdot j}}\cdot \nabla u,{F_{ \cdot j}}) = 0 $ by the divergence free conditions of $u$ and $F_{\cdot j}$.

\textsf{Step 2}: $L^{2}$-norms of $\nabla^{m}u$, $\nabla^{m}F$.

Applying the operator $\nabla^{m}$ to the both sides of (\ref{1.1}), and taking the $L^{2}$ inner product of the
resulting equations with $\nabla^{m}u$ and $\nabla^{m}F_{.j}$, respectively, adding them up and then
integrating over $\mathbb{R}^{3}$ by parts, we have
\be\label{3.3}
 &&\frac{1}{2}\frac{\mbox{d}}{\mbox{d}t}(\|\nabla^{m}u\|_{L^{2}}^{2}+\|\nabla^{m}F\|_{L^{2}}^{2})
+\|\nabla^{m}F\|_{L^{2}}^{2}+\|\nabla^{m+1}u\|_{L^{2}}^{2} \nonumber \\
&\leq&-\int_{\mathbb{R}^{3}}\nabla^{m}(u\cdot\nabla u)\cdot \nabla^{m}u\mbox{d}x
+\int_{\mathbb{R}^{3}}\nabla^{m}(F_{.i}\cdot\nabla F_{.i})\cdot \nabla^{m}u\mbox{d}x\nonumber \\
&&-\int_{\mathbb{R}^{3}}\nabla^{m}(u\cdot\nabla F_{.j})\cdot \nabla^{m}F_{.j}\mbox{d}x+
\int_{\mathbb{R}^{3}}\nabla^{m}(F_{.j}\cdot\nabla u)\cdot \nabla^{m}F_{.j}\mbox{d}x\nonumber \\
&\triangleq&\sum_{i=1}^{4}I_{i}.
\ee
In what follows, we estimate each term on the right-hand side of above equation separately.\\
For the term $I_{1}$, we obtain
\be
I_{1}&=&-\int_{\mathbb{R}^{3}}\nabla^{m}(u\cdot\nabla u)\cdot \nabla^{m}u\mbox{d}x
=-\sum_{0\leq l\leq m}C_{m}^{l}\int_{\mathbb{R}^{3}}(\nabla^{l}u\cdot\nabla^{m-l} \nabla u)\cdot \nabla^{m} u\mbox{d}x\nonumber \\
&\leq &\sum_{0\leq l\leq m}C_{m}^{l}\|\nabla^{l}u\nabla^{m-l} \nabla u\|_{L^{\frac{6}{5}}}\|\nabla^{m} u\|_{L^{6}}\mbox{d}x\nonumber.
\ee
For $0\leq l\leq [\frac{m}{2}]$, by applying the Gagliardo-Nirenberg inequality, it leads to
\be
&&\|\nabla^{l}u\nabla^{m-l}\nabla  u\|_{L^{\frac{6}{5}}}\leq C\|\nabla^{l}u\|_{L^{3}}\|\nabla^{m-l+1} u\|_{L^{2}}\nonumber \\
&\leq & C\|\Lambda^{\alpha}u\|_{L^{2}}^{1-\frac{l}{m}}\|\nabla^{m+1}u\|_{L^{2}}^{\frac{l}{m}}
\|\nabla u\|_{L^{2}}^{\frac{l}{m}}\|\nabla^{m+1} u\|_{L^{2}}^{1-\frac{l}{m}}\nonumber \\
&\leq & C\delta\|\nabla^{m+1}u\|_{L^{2}}\nonumber,
\ee
where $\alpha$ satisfies
\be
  \frac{l}{3}-\frac{1}{3}=(\frac{\alpha}{3}-\frac{1}{2})\times(1-\frac{l}{m})+(\frac{m+1}{3}-\frac{1}{2})\times\frac{l}{m}\nonumber
   \ee
with $\alpha=\frac{m-2l}{2(m-l)}\in[0,\frac{1}{2}]$.\\
However, for $[\frac{m}{2}]+1\leq l\leq m$, we have
\be
&&\|\nabla^{l}u\nabla^{m-l} \nabla u\|_{L^{\frac{6}{5}}}\leq C\|\nabla^{l}u\|_{L^{2}}\|\nabla^{m-l+1} u\|_{L^{3}}\nonumber \\
&\leq & C\|u\|_{L^{2}}^{1-\frac{l}{m+1}}\|\nabla^{m+1}u\|_{L^{2}}^{\frac{l}{m+1}}
\|\Lambda^{\alpha} u\|_{L^{2}}^{\frac{l}{m+1}}\|\nabla^{m+1} u\|_{L^{2}}^{1-\frac{l}{m+1}}\nonumber \\
&\leq & C\delta\|\nabla^{m+1}u\|_{L^{2}}\nonumber,
\ee
where $\alpha$ satisfies
\be
  \frac{m-l+1}{3}-\frac{1}{3}=(\frac{\alpha}{3}-\frac{1}{2})\times(\frac{l}{m+1})+(\frac{m+1}{3}-\frac{1}{2})\times(1-\frac{l}{m+1})\nonumber
   \ee
with $\alpha=\frac{m+1}{2l}\in(\frac{1}{2},1]$.

In both cases, we obtain
\be
 I_{1}\leq C\delta\|\nabla^{m+1}u\|_{L^{2}}^{2}.\nonumber
  \ee
For the term $I_{2}$, an application of the estimate (\ref{fg}) and integration by parts directly yields
\be
I_{2}&=&\int_{\mathbb{R}^{3}}\nabla^{m}(F_{.i}\cdot\nabla F_{.i})\cdot\nabla^{m}u\mbox{d}x\nonumber \\
&=&-\int_{\mathbb{R}^{3}}\nabla^{m-1}(F_{.i}\cdot\nabla F_{.i})\cdot\nabla^{m+1}u\mbox{d}x\nonumber \\
&\leq& \|F_{.i}\|_{L^{\infty}}\|\nabla^{m-1}\nabla F_{.i}\|_{L^{2}}\|\nabla^{m+1}u\|_{L^{2}}
+\|\nabla^{m-1}F_{.i}\|_{L^{6}}\|\nabla F_{.i}\|_{L^{3}}\|\nabla^{m+1}u\|_{L^{2}}
\nonumber \\
&\leq &C\delta(\|\nabla^{m+1}u\|_{L^{2}}^{2}+\|\nabla^{m}F_{.i}\|_{L^{2}}^{2}).\nonumber
\ee
For the term $I_{3}$, we obtain
\be
I_{3}&=&-\int_{\mathbb{R}^{3}}\nabla^{m}(u\cdot\nabla F_{.j})\cdot \nabla^{m}F_{.j}\mbox{d}x\nonumber \\
&=&-\int_{\mathbb{R}^{3}}\nabla^{m}(u\cdot\nabla F_{.j})\cdot\nabla^{m}F_{.j}\mbox{d}x
+\int_{\mathbb{R}^{3}}(u\cdot\nabla)\nabla^{m}F_{.j}\cdot\nabla^{m}F_{.j}\mbox{d}x\nonumber \\
&=&-\int_{\mathbb{R}^{3}}([\nabla^{m},u\cdot\nabla]F_{.j})\cdot\nabla^{m}F_{.j}\mbox{d}x\nonumber \\
&\leq &(\|\nabla u\|_{L^{\infty}}\|\nabla^{m}F_{.j}\|_{L^{2}}+\|\nabla^{m}u\|_{L^{6}}\|\nabla F_{.j}\|_{L^{3}})\|\nabla^{m}F_{.j}\|_{L^{2}}\nonumber \\
&\leq &C\delta(\|\nabla^{m+1}u\|_{L^{2}}^{2}+\|\nabla^{m}F_{.j}\|_{L^{2}}^{2}),\nonumber
\ee
where use has been made of the fact
\be
\int_{\mathbb{R}^{3}}(u\cdot\nabla)\nabla^{m}F_{.j}\cdot\nabla^{m}F_{.j}\mbox{d}x=0 \nonumber
\ee
and the commutator estimate (\ref{b-e}).

For the last term, by means of the estimate (\ref{fg}) it yields
\be
I_{4}&=&\int_{\mathbb{R}^{3}}\nabla^{m}(F_{.j}\cdot\nabla u)\cdot\nabla^{m}F_{.j}\mbox{d}x\nonumber \\
&\leq &C(\|F_{.j}\|_{L^{\infty}}\|\nabla^{m+1}u\|_{L^{2}}+\|\nabla^{m}F_{.j}\|_{L^{2}}\|\nabla u\|_{L^{\infty}}
)\|\nabla^{m}F_{.j}\|_{L^{2}}\nonumber \\
&\leq &C\delta(\|\nabla^{m+1}u\|_{L^{2}}^{2}+\|\nabla^{m} F_{.j}\|_{L^{2}}^{2}).\nonumber
\ee
Substituting the estimates $I_{1}-I_{4}$ into (\ref{3.3}), one has the key estimate by choosing $\delta$ small enough.
\be\label{1.9}
\frac{\mbox{d}}{\mbox{d}t}(\|\nabla^{m}u\|_{L^{2}}^{2}+\|\nabla^{m}F\|_{L^{2}}^{2})
+\|\nabla^{m}F\|_{L^{2}}^{2}+\|\nabla^{m+1}u\|_{L^{2}}^{2}\leq 0.
\ee


\textsf{Step 3}: Conclusion

Summing up (\ref{1.12}) and (\ref{1.9}), we thereby obtain
\be\label{3.33}
\frac{\mbox{d}}{\mbox{d}t}(\|u\|_{H^{m}}^{2}+\|F\|_{H^{m}}^{2})
+\|F\|_{H^{m}}^{2}+\|\nabla u\|_{H^{m}}^{2}\leq 0.\nonumber
\ee
Integrating the above inequality directly in time leads to
\be
\|u(\cdot,t)\|_{H^{m}}^{2}+\|F(\cdot,t)\|_{H^{m}}^{2}
+\int_{0}^{t}\|F(\cdot,\tau)\|_{H^{m}}^{2}+\|\nabla u(\cdot,\tau)\|_{H^{m}}^{2}\mbox{d}\tau
\leq \|(u_{0},F_{0})\|_{H^{m}}^{2},\nonumber
\ee
 we thus finish the proof of Lemma \ref{lem3.1}.
\end{proof}


Combining the local existence Theorem \ref{them1} and the a priori estimate Lemma \ref{lem3.1}, we will complete the proof of the global existence of the smooth solution by a continuous extending argument.

\vspace{0.2in}
\textsf{Proof of Theorem \ref{them1}}
\begin{proof}
Assume
\be\label{3.35}
E_{0}:=\|u_{0}\|_{H^{m}}+\|F_{0}\|_{H^{m}}<\delta/\sqrt{C_{1}},
\ee
where $\delta$ is defined in Lemma \ref{lem3.1}. By choosing $\delta_0=\delta/\sqrt{C_{1}}$, we can prove there exists a global-in-time solution to the system (\ref{1.1}).
As the initial data satisfies $E_{0}<\delta/\sqrt{C_{1}}$, then according to Theorem C there exists a positive
constant $T_{1}> 0$, such that the smooth solution of (\ref{1.1}) and (\ref{1.4}) exists on $[0,T_{1}]$ and the following estimate holds.
\be
\|u(\cdot,t)\|_{H^{m}}^{2}+\|F(\cdot,t)\|_{H^{m}}^{2}
+\int_{0}^{t}\|F(\cdot,\tau)\|_{H^{m}}^{2}+\|\nabla u(\cdot,\tau)\|_{H^{m}}^{2}\mbox{d}\tau
\leq C_{1}E_{0}^{2}
\ee
for $t\in[0,T_{1}]$.
It implies
\be\label{3.39}
E_{1}:=\sup_{0\leq t\leq T_{1}}\|(u,F)(\cdot,t)\|_{H^{m}}\leq \sqrt{C_{1}}E_{0}<\delta.
\ee
Thus the solution satisfies the a priori estimate (\ref{3.31}), by Lemma \ref{lem3.1} and (\ref{3.35}) we get
\be\label{3.36}
E_{1}\leq E_{0}<\delta \sqrt{C_{1}}.
\ee
Therefore by Theorem C the initial problem (\ref{1.1}) for $t\geq T_{1}$, with the initial data $(u,F)(x,T_{1})$, has again a unique local solution $(u,F)$ satisfying
\be
\|u(\cdot,t)\|_{H^{m}}^{2}+\|F(\cdot,t)\|_{H^{m}}^{2}
+\int_{T_1}^{t}\|F(\cdot,\tau)\|_{H^{m}}^{2}+\|u(\cdot,\tau)\|_{H^{m+1}}^{2}\mbox{d}\tau
\leq C_{1}(\|u(\cdot,T_{1})\|_{H^{m}}^{2}+\|F(\cdot,T_{1})\|_{H^{m}}^{2}),\nonumber
\ee
for $t\in[T_{1},2T_{1}]$.
Combining  this with (\ref{3.36}), it yields
\be\label{3.38}
\sup_{T_{1}\leq t\leq 2T_{1}}\|(u,F)(\cdot,t)\|_{H^{m}}\leq \sqrt{C_{1}}E_{1}<\delta.
\ee
Then by (\ref{3.39}), (\ref{3.38}) and Lemma \ref{lem3.1}, it gives rise to
\be
E_{2}:=\sup_{0\leq t\leq 2T_{1}}\|(u,F)(\cdot,t)\|_{H^{m}}\leq E_{0}<\delta/\sqrt{C_{1}}.\nonumber
\ee
Therefore, we can repeat the same argument as above for $0\leq t\leq nT_{1}, \ n=3,4,\cdots $ and finally obtain
the global existence of the smooth solution for the system (\ref{1.1}).\par

\end{proof}

 \section{Proof of Theorem \ref{2}}
\setcounter{equation}{0}

In this section we prove the decay rate of the smooth solution to the equations (\ref{1.1}) in $L^{2}$ space. For the convenience of presentation, we denote the Fourier transform of $f$ by $\mathcal{F}f$ or $\hat{f}$ in the subsequences.

In Section 3, we have already obtained
\be\label{1}
\frac{\mathrm{d}}{{\mathrm{d}t}}(\|u\|_{L^{2}}^{2}+\|F\|_{L^{2}}^{2})+
\|\nabla u\|_{L^{2}}^{2}+\|F\|_{L^{2}}^{2}=0.
 \ee
Applying Plancherel's theorem to (\ref{1}) it yields
\be\label{4.19}
\frac{\mbox{d}}{\mbox{d}t}\int_{\mathbb{R}^{3}}(|\hat{u}(\xi)|^{2}+|\hat{F}(\xi)|^{2})\mbox{d}\xi=
-\int_{\mathbb{R}^{3}}(|\xi|^{2}|\widehat{u}(\xi)|^{2}+|\widehat{F}(\xi)|^{2})\mbox{d}\xi.\nonumber
\ee
By decomposing the frequency domain into two time-dependent subsets, it yields
\bes
&&\frac{\mbox{d}}{\mbox{d}t}\int_{\mathbb{R}^{3}}(|\hat{u}(\xi)|^{2}+|\hat{F}(\xi)|^{2})\mbox{d}\xi\nonumber\\
&\leq& -\int_{|\xi|\geq g(t) }g(t)^{2}|\hat{u}(\xi)|^{2}\mbox{d}\xi
-\int_{|\xi|\leq g(t) }|\xi|^{2}|\hat{u}(\xi)|^{2}\mbox{d}\xi-\int_{\mathbb{R}^{3}}|\hat{F}(\xi)|^{2}\mbox{d}\xi\nonumber\\
&=& -\int_{\mathbb{R}^{3} }g(t)^{2}|\hat{u}(\xi)|^{2}\mbox{d}\xi
+\int_{|\xi|\leq g(t)}g(t)^{2}|\hat{u}(\xi)|^{2}\mbox{d}\xi-\int_{\mathbb{R}^3}|\hat{F}(\xi)|^{2}\mbox{d}\xi,
\ees
where $g(t)$ is defined in Lemma \ref{lem2.5} and $\gamma$ is a constant to be determined later. There exists a time $T_0>0$ such that, when $t>T_0$, one has
\be\label{4.20}
\frac{\mbox{d}}{\mbox{d}t}\int_{\mathbb{R}^{3}}|\hat{u}(\xi)|^{2}+|\hat{F}(\xi)|^{2}\mbox{d}\xi+
\frac{\gamma}{1+t}\int_{\mathbb{R}^{3} }|\hat{u}(\xi)|^{2}+|\hat{F}(\xi)|^2\mbox{d}\xi\le
\frac{\gamma}{1+t}\int_{|\xi|\leq g(t)}|\hat{u}(\xi)|^{2}+|\hat{F}(\xi)|^2\mbox{d}\xi.
\ee
Multiplying  (\ref{4.20}) by the integrating factor $(t+1)^{\gamma}$, it follows that
\be\label{4.24}
\frac{\mbox{d}}{\mbox{d}t}\Big((t+1)^{\gamma}(\|u(t)\|_{L^{2}}^{2}+\|F(t)\|_{L^{2}}^{2})\Big)\leq \gamma(t+1)^{\gamma-1}\int_{|\xi|\leq g(t)}(|\hat{u}(\xi)|^{2}+|\hat{F}(\xi)|^{2})\mbox{d}\xi.
\ee

To finish the proof, we prove the estimates of $|\hat{u}(\xi)|$ and $|\hat{F}(\xi)|$ as follows.
\begin{lem}\label{lem3.3}
Let $(u,F)$ be a smooth solution to the Cauchy problem (\ref{1.1}) with the small initial data $(u_{0},F_{0})\in L^{1} \cap H^{m}$, $m\ge 3$. Then there exist
\be\label{4.22}
|\widehat{u}(\xi,t)|\leq C(|\widehat{u_{0}}(\xi)|+\frac{1}{|\xi|})
\ee
and
\be\label{4.23}
|\widehat{F}(\xi,t)|\leq C(|\widehat{F_{0}}(\xi)|+|\xi|).
\ee
\end{lem}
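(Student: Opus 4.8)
The plan is to derive integral representations for $\hat u(\xi,t)$ and $\hat F(\xi,t)$ from the equations \eqref{1.1} in Fourier variables and then estimate each resulting term by Duhamel-type bounds, using the global energy estimate from Lemma \ref{lem3.1} to control the nonlinearities. First I would take the Fourier transform of $(\ref{1.1})_1$ and $(\ref{1.1})_2$. Applying $\mathcal F$ to the velocity equation and using $\mbox{div}\, u=0$ (so that the pressure term is removed by the Leray projection $\mathbb P$ in Fourier space, with symbol bounded by $1$), one gets
\be
\partial_t \hat u + |\xi|^2 \hat u = \widehat{\mathbb P\big(-u\cdot\nabla u + F_{.i}\cdot\nabla F_{.i}\big)}(\xi,t) =: \hat G(\xi,t),\nonumber
\ee
so that by Duhamel
\be
\hat u(\xi,t) = e^{-|\xi|^2 t}\,\widehat{u_0}(\xi) + \int_0^t e^{-|\xi|^2(t-\tau)}\hat G(\xi,\tau)\,\mbox{d}\tau.\nonumber
\ee
Similarly, Fourier-transforming the $F$ equation gives
\be
\partial_t \hat F + \hat F = \widehat{\big(-u\cdot\nabla F_{.j} + F_{.j}\cdot\nabla u\big)}(\xi,t) =: \hat H(\xi,t),\nonumber
\ee
hence
\be
\hat F(\xi,t) = e^{-t}\,\widehat{F_0}(\xi) + \int_0^t e^{-(t-\tau)}\hat H(\xi,\tau)\,\mbox{d}\tau.\nonumber
\ee

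Next I would bound the nonlinear symbols pointwise in $\xi$. Since every nonlinear term is of the form (function)$\cdot\nabla$(function), transforming one derivative out gives $|\hat G(\xi,\tau)| \le |\xi|\,\big(\widehat{|u\otimes u|} + \widehat{|F\otimes F|}\big)$ up to constants, and the $L^\infty_\xi$ norm of these transforms is bounded by the $L^1_x$ norm of the products, which by Cauchy–Schwarz is $\le \|u\|_{L^2}^2 + \|F\|_{L^2}^2 \le C\delta_0^2$, a uniform constant, by Theorem \ref{them1}. Thus $|\hat G(\xi,\tau)| \le C|\xi|$ and likewise $|\hat H(\xi,\tau)| \le C|\xi|$ uniformly in $\tau$. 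Plugging into the Duhamel formulas: for $\hat u$, $e^{-|\xi|^2 t}|\widehat{u_0}| \le |\widehat{u_0}|$, and
\be
\Big|\int_0^t e^{-|\xi|^2(t-\tau)}\hat G(\xi,\tau)\,\mbox{d}\tau\Big| \le C|\xi|\int_0^t e^{-|\xi|^2(t-\tau)}\,\mbox{d}\tau \le C|\xi|\cdot\frac{1}{|\xi|^2} = \frac{C}{|\xi|},\nonumber
\ee
which is exactly \eqref{4.22}. For $\hat F$, $e^{-t}|\widehat{F_0}| \le |\widehat{F_0}|$ and $\big|\int_0^t e^{-(t-\tau)}\hat H\,\mbox{d}\tau\big| \le C|\xi|\int_0^t e^{-(t-\tau)}\,\mbox{d}\tau \le C|\xi|$, giving \eqref{4.23}.

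The main obstacle I anticipate is making the pointwise bound on the nonlinear Fourier symbols fully rigorous: one must justify that $\widehat{u\cdot\nabla u}$ and the other nonlinear terms genuinely factor so that one full derivative can be pulled into an explicit factor of $\xi$, and that the remaining transform sits in $L^\infty_\xi$ with a uniform-in-time bound — this is where the global $L^2$ control (and, if needed, higher Sobolev control) from Lemma \ref{lem3.1} and Theorem \ref{them1} is essential, rather than just local-in-time bounds. A secondary technical point is the treatment of the pressure/Leray projector for $\hat u$: one should note $\mbox{div}(u\cdot\nabla u) = \nabla\cdot\nabla\cdot(u\otimes u)$ is handled by $\mathbb P$, whose Fourier multiplier $I - \xi\xi^T/|\xi|^2$ has operator norm $\le 1$, so it does not affect the estimates. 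Once these points are in place, the two bounds \eqref{4.22}–\eqref{4.23} follow directly from the Duhamel formulas as above, completing the proof of Lemma \ref{lem3.3}.
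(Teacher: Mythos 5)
Your proposal is correct and follows essentially the same route as the paper: Duhamel formulas in Fourier space, pulling one derivative out of each quadratic nonlinearity as a factor of $|\xi|$, bounding the remaining transform in $L^\infty_\xi$ by the $L^1_x$ norm of the product and hence by the globally controlled $L^2$ norms, and then integrating the semigroup factors to get $C/|\xi|$ for $u$ and $C|\xi|$ for $F$. The only cosmetic difference is that the paper keeps the pressure explicitly and estimates $\widehat{\nabla p}$ from $\Delta p=-\nabla\cdot\mathrm{div}(u\otimes u)+\nabla\cdot\mathrm{div}(F\otimes F)$ rather than invoking the Leray projector, which is the same computation in Fourier variables.
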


\begin{proof}
Taking the Fourier transform of the equations (\ref{1.1}) we have
\be\label{4.1}
\widehat{u_{t}}(\xi,t)+|\xi|^{2}\widehat{u}(\xi,t)=H(\xi,t),
\ee
where $H(\xi,t)=-\widehat{u\cdot \nabla u}(\xi,t)-\widehat{\nabla p}(\xi,t)+\widehat{F\cdot\nabla F_{.i}}(\xi,t) $
and
\be\label{4.2}
\widehat{F_{t}}(\xi,t)+\widehat{F}(\xi,t)=G(\xi,t),
\ee
where $G(\xi,t)=-\widehat{u\cdot \nabla F}(\xi,t)+\widehat{ F\cdot\nabla u}(\xi,t)$.\\
Multiplying (\ref{4.1}) and (\ref{4.2}) by the integrating factor $\mbox{e}^{|\xi|^{2}t}$ and $\mbox{e}^{t}$ respectively, we have
\be\label{4.3}
\frac{\mbox{d}}{\mbox{d}t}(\mbox{e}^{|\xi|^{2}t}\widehat{u}(\xi,t))\leq \mbox{e}^{|\xi|^{2}t}H(\xi,t)
\ee
and
\be\label{4.15}
\frac{\mbox{d}}{\mbox{d}t}(\mbox{e}^{t}\widehat{F}(\xi,t))\leq
\mbox{e}^{t}G(\xi,t).
\ee
Integrating (\ref{4.3}) and (\ref{4.15}) in time from $0$ to $t$, it arrives at
\be\label{4.4}
\widehat{u}(\xi,t)\leq \mbox{e}^{-|\xi|^{2}t}\widehat{u_{0}}(\xi,t)+\int_{0}^{t}\mbox{e}^{-|\xi|^{2}(t-\tau)}H(\xi,\tau)\mbox{d}\tau
\ee
and
\be\label{4.16}
\widehat{F}(\xi,t)\leq \mbox{e}^{-t}\widehat{F_{0}}(\xi,t)+\int_{0}^{t}\mbox{e}^{-(t-\tau)}G(\xi,\tau)\mbox{d}\tau.\nonumber
\ee
Now, we derive the estimates for $H(\xi,t)$ and $G(\xi,t)$.
Taking the divergence operator on the first equation of (\ref{1.1}) and by using the divergence free condition of $u$ and $F$ one has
\be\label{4.5}
\triangle p=-\nabla\cdot\mbox{div}(u\otimes u)+\nabla\cdot\mbox{div}(F\otimes F).\nonumber
\ee
Since the Fourier transform is bounded map from $L^{1}$ to $L^{\infty}$, it leads to
\be\label{4.6}
|\widehat{\nabla p}(\xi,t)|&\leq &|\xi||\widehat{p}(\xi,t)|\leq |\xi|(\|u(t)u(t)\|_{L^{1}}+\|F(t)F(t)\|_{L^{1}})\nonumber \\
&\leq& C|\xi|(\|u(t)\|_{L^{2}}^{2}+\|F(t)\|_{L^{2}}^{2}).
\ee
Similarly, for the convective terms, we also have
\be\label{4.7}
|\widehat{u\cdot \nabla u}(\xi,t)|\leq C |\xi|\|u(t)\|_{L^{2}}^{2}
\ee
and
\be\label{4.8}
|\widehat{F\cdot \nabla F}(\xi,t)|\leq C |\xi|\|F(t)\|_{L^{2}}^{2},
\ee
and the following estimates
\be\label{4.9}
|\widehat{u\cdot \nabla F}(\xi,t)|\leq C |\xi|(\|u(t)\|_{L^{2}}^{2}+\|F(t)\|_{L^{2}}^{2})
\ee
and
\be\label{4.10}
|\widehat{F\cdot \nabla u}(\xi,t)|\leq C |\xi|(\|u(t)\|_{L^{2}}^{2}+\|F(t)\|_{L^{2}}^{2}).
\ee
Combining the estimates (\ref{4.6})-(\ref{4.8}) together, we get
\be\label{4.12}
|H(\xi,t)|\leq C|\xi|(\|u(t)\|_{L^{2}}^{2}+\|F(t)\|_{L^{2}}^{2}).
\ee
Combining the estimates (\ref{4.9})-(\ref{4.10}), we obtain
\be\label{4.13}
|G(\xi,t)|\leq C|\xi|(\|u(t)\|_{L^{2}}^{2}+\|F(t)\|_{L^{2}}^{2}).\nonumber
\ee
Inserting $|H(\xi,t)|$ into (\ref{4.4}) and using the boundedness of $L^{2}$ norms of the solution, we deduce
\be\label{4.14}
|\widehat{u}(\xi,t)|&\leq&|\widehat{u_{0}}(\xi)|+\frac{C}{|\xi|}(\|u_{0}\|_{L^{2}}^{2}+\|F_{0}\|_{L^{2}}^{2})(1-\mbox{e}^{-|\xi|^{2}t})\nonumber \\
&\leq & C(|\widehat{u_{0}}(\xi)|+\frac{1}{|\xi|}).\nonumber
\ee
Using a similar argument, we have
\be\label{4.17}
|\widehat{F}(\xi,t)|&\leq&|\widehat{F_{0}}(\xi)|+
C|\xi|(\|u_{0}\|_{L^{2}}^{2}+\|F_{0}\|_{L^{2}}^{2})(1-\mbox{e}^{-t})\nonumber \\
&\leq & C(|\widehat{F_{0}}(\xi)|+|\xi|).\nonumber
\ee
We thus derive the estimates of $|\hat{u}(\xi)|$ and $|\hat{F}(\xi)|$.
\end{proof}

Putting (\ref{4.22}) and (\ref{4.23}) into the right-hand side of (\ref{4.24}) and applying Lemma \ref{lem2.5}, it follows
\be\label{4.25}
\frac{\mbox{d}}{\mbox{d}t}\Big((t+1)^{\gamma}(\|u(t)\|_{L^{2}}^{2}+\|F(t)\|_{L^{2}}^{2})\Big)
&\leq & C(t+1)^{\gamma-1}\int_{|\xi|\leq g(t)}(|\hat{u_{0}}(\xi)|^{2}+|\hat{F_{0}}(\xi)|^{2})\mbox{d}\xi\nonumber\\
&+& C(t+1)^{\gamma-1}\int_{|\xi|\leq g(t)}\frac{1}{|\xi|^{2}}\mbox{d}\xi+ C(t+1)^{\gamma-1}\int_{|\xi|\leq g(t)}|\xi|^{2}\mbox{d}\xi\nonumber\\
&\leq & C(t+1)^{\gamma-1-\frac{3}{2}}+C(t+1)^{\gamma-1-\frac{1}{2}}+C(t+1)^{\gamma-1-\frac{5}{2}}.\nonumber
\ee
Integrating the above inequality in time from $0$ to $t$ leads to
\be
\|u(t)\|_{L^{2}}^{2}+\|F(t)\|_{L^{2}}^{2}\leq C\Big((t+1)^{-\gamma}+(t+1)^{-\frac{3}{2}}+C(t+1)^{-\frac{1}{2}}+C(t+1)^{-\frac{5}{2}}\Big).\nonumber
\ee
By choosing $\gamma>\frac12$, we obtain
\be\label{4.26}
\|u(t)\|_{L^2}^{2}+\|F(t)\|_{L^2}^{2}\leq C(t+1)^{-\frac{1}{2}}.
\ee
Again inserting the above estimate (\ref{4.26}) of $\|u(t)\|_{L^2}^{2}+\|F(t)\|_{L^2}^{2}$ into the estimate (\ref{4.12}), it follows
\be\label{1.3}
\int_{0}^{t}\mbox{e}^{-|\xi|^{2}(t-\tau)}|H(\xi,\tau)|\mbox{d}\tau&\leq& C|\xi|\int_{0}^{t}(\tau+1)^{-\frac{1}{2}}\mbox{d}\tau\nonumber\\
&\leq& C|\xi|\Big((t+1)^{\frac{1}{2}}-1)\Big)\nonumber\\
&\leq& C (t+1)^{-\frac{1}{2}}  \Big((t+1)^{\frac{1}{2}}\Big)\le C,
\ee
if $|\xi|$ is in the ball $S(t)$ defined in Lemma \ref{lem2.5}.
Putting (\ref{1.3}) into (\ref{4.4}), we get $|\widehat{u}(\xi,t)|\leq C(|\widehat{u_{0}}(\xi)|+1)$.
Arguing similarly, we obtain $|\widehat{F}(\xi,t)|\leq C(|\widehat{F_{0}}(\xi)|+1)$.
Inserting these estimates of $\widehat{u}(\xi,t)$ and $\widehat{F}(\xi,t)$ into (\ref{4.24}) and by Lemma \ref{lem2.5} we have
\be
\frac{\mbox{d}}{\mbox{d}t}\Big((t+1)^{\gamma}(\|u(t)\|_{L^2}^{2}+\|F(t)\|_{L^2}^{2})\Big)
\leq \gamma(t+1)^{\gamma-1}(t+1)^{-\frac{3}{2}}.\nonumber
\ee
Integrating the above estimate in time and choosing $\gamma>\frac32$, it leads to
\be
\|u(t)\|_{L^2}^{2}+\|F(t)\|_{L^2}^{2}\leq C(t+1)^{-\frac{3}{2}},\nonumber
\ee
which completes the proof of Theorem \ref{2}.


 \section{Proof of Theorem \ref{3}}
\setcounter{equation}{0}

This section is devoted to showing the higher order derivative's optimal decay estimate of a smooth solution to the equations (\ref{1.1}) in $L^2$ norm.

\begin{proof}
As usual, we denote $S(t)=\{\xi\in \mathbb{R}^{3}:|\xi|\leq f(t)\}$, with $f(t)=(\frac{\gamma}{t+1})^{\frac{1}{2}}$,
where $\gamma$ is a constant to be determined later. For the order $m+1$ derivative term, by the Fourier-splitting method again, it is deduced as follows
\be\label{4.29}
\|\Lambda^{m+1}u\|_{L^{2}}^{2}&=&\int_{\mathbb{R}^{3}}|\xi|^{2}|\mathcal{F}\Lambda^{m}u(\xi,t)|^{2}\mbox{d}\xi\nonumber \\
&\geq &\int_{|\xi|\geq  f(t)}|\xi|^{2}|\mathcal{F}\Lambda^{m}u(\xi,t)|^{2}\mbox{d}\xi\nonumber \\
&\geq &f^{2}(t)\|\Lambda^{m}u\|_{L^{2}}^{2}-f^{2}(t)\int_{S(t)}|\mathcal{F}\Lambda^{m}u(\xi,t)|^{2}\mbox{d}\xi\nonumber \\
&\geq &f^{2}(t)\|\Lambda^{m}u\|_{L^{2}}^{2}-f^{4}(t)\int_{\mathbb{R}^{3}}|\mathcal{F}\Lambda^{m-1}u(\xi,t)|^{2}\mbox{d}\xi,
\ee
where $m\ge 1$ is an integer.

Inserting the estimate (\ref{4.29}) into (\ref{1.9}), it follows that for $t>T_0$ with some a $T_0>0$
\be\label{4.31}
 &&\frac{\mbox{d}}{\mbox{d}t}(\|\Lambda^{m}u\|_{L^{2}}^{2}+\|\Lambda^{m}F\|_{L^{2}}^{2})
+\frac{\gamma}{t+1}\|\Lambda^{m}F\|_{L^{2}}^{2}+\frac{\gamma}{t+1}(t)\|\Lambda^{m}u\|_{L^{2}}^{2} \nonumber \\
&&\leq \Big(\frac{\gamma}{t+1}\Big)^{2}(\|\Lambda^{m-1}u\|_{L^{2}}^{2} + \|\Lambda^{m-1}F\|_{L^{2}}^{2}).
\ee
If $m=1$, multiplying both sides of the inequality (\ref{4.29}) by $(t+1)^\gamma$ one has
\be\label{a1}
\frac{\mbox{d}}{\mbox{d}t}\Big((t+1)^{\gamma}(\|\Lambda u\|_{L^{2}}^{2}+\|\Lambda F\|_{L^{2}}^{2})\Big)&\leq& C{t+1}^{\gamma-2}(\|u\|_{L^{2}}^{2} + \|F\|_{L^{2}}^{2})\\ \nonumber
&\le& C(t+1)^{\gamma-2-\frac32}.
\ee
Integrating the inequality (\ref{a1}) from $T_0$ to $t$ one has
\be
&&(t+1)^{\gamma}(\|\Lambda u\|_{L^{2}}^{2}+\|\Lambda F\|_{L^{2}}^{2})\nonumber\\
&\leq &(T_{0}+1)^{\gamma}(\|\Lambda u(T_{0})\|_{L^{2}}^{2}+\|\Lambda F(T_{0})\|_{L^{2}}^{2})+C(t+1)^{\gamma-1-\frac32}.\nonumber
\ee
Therefore we can obtain by choosing $\gamma>\frac52$
\be
\|\Lambda u\|_{L^{2}}^{2}+\|\Lambda F\|_{L^{2}}^{2}&\leq & C(t+1)^{-\frac32-1}.
\ee

To finish the proof of Theorem \ref{3}, we use the argument of induction by $m$.\\
Assume
\be\label{4}
\|\Lambda^{m-1}u\|_{L^{2}}^{2}+\|\Lambda^{m-1}F\|_{L^{2}}^{2}\leq C_{m-1}(t+1)^{-\frac32-(m-1)}.
\ee
 After inserting (\ref{4}) into (\ref{4.31}), and multiplying $(t+1)^{\gamma}$ on the both sides of the resulting inequality, we derive
 \be\label{4.32}
\frac{\mbox{d}}{\mbox{d}t}\Big((t+1)^{\gamma}(\|\Lambda^{m}u\|_{L^{2}}^{2}+\|\Lambda^{m}F\|_{L^{2}}^{2})\Big)\leq \gamma^2C_{m-1}(t+1)^{\gamma-\frac32-(m-1)-2}.\nonumber
\ee
Integrating the above inequality in time from $T_{0}$ to $t$ we get
\be
&&(t+1)^{\gamma}(\|\Lambda^{m}u\|_{L^{2}}^{2}+\|\Lambda^{m}F\|_{L^{2}}^{2})\nonumber\\
&\leq &(T_{0}+1)^{\gamma}(\|\Lambda^{m}u(T_{0})\|_{L^{2}}^{2}+\|\Lambda^{m}F(T_{0})\|_{L^{2}}^{2})+\gamma^2C_{m-1}(t+1)^{\gamma-\frac32-(m-1)-1}.\nonumber
\ee
Similarly, by choosing $\gamma>\frac32+m$ we obtain
\bes
\|\Lambda^{m}u\|_{L^{2}}^{2}+\|\Lambda^{m}F\|_{L^{2}}^{2}\leq C_{m}(t+1)^{-\frac{3}{2}-m}.
\ees
We thus complete the proof of Theorem \ref{3}.
\end{proof}


\vspace{0.4cm}

%
%
%

 \textbf{Acknowledgements} The research of B Yuan
was partially supported by the National Natural Science Foundation
of China (No. 11471103).


\end{document}